\numberwithin{equation}{section}
\pgfplotsset{compat=newest}
\newcommand{\E}{\mathcal{E}}
\newcommand{\1}{\textbf{1}}
\newcommand{\ep}{\epsilon}
\newcommand{\G}{\mathcal{G}}
\newtheorem{theorem}{Theorem}[section]
\newtheorem{proposition}[theorem]{Proposition}
\newtheorem{remark}[theorem]{Remark}
\newtheorem{corollary}[theorem]{Corollary}
\newtheorem{example}[theorem]{Example}
\DeclarePairedDelimiterX{\bracket}[3]{#1}{#2}{#3}
\providecommand{\newoperator}[3]{\newcommand*{#1}{\mathop{#2}#3}}
\providecommand{\renewoperator}[3]{\renewcommand*{#1}{\mathop{#2}#3}}
\renewoperator{\Re}{\mathrm{Re}}{\nolimits}
\renewoperator{\Im}{\mathrm{Im}}{\nolimits}
\DeclarePairedDelimiterXPP{\nrm}[2]{}{\lVert}{\rVert}{\ensuremath{_{#1}}}{\ifblank{#2}{\:\cdot\:}{#2}}
\DeclarePairedDelimiterXPP\prob[1]{\mathbb{P}}{\lbrace}{\rbrace}{}{#1} 
\DeclarePairedDelimiterXPP\probability[2]{\mathbb{P}_{#1}}{\lbrace}{\rbrace}{}{#2} 
\DeclarePairedDelimiterXPP\expectation[1]{\mathbb{E}}{\lbrack}{\rbrack}{}{#1} 
\DeclarePairedDelimiterXPP\expectationdist[2]{\mathbb{E}_{#1}}{\lbrack}{\rbrack}{}{#2} 
\DeclarePairedDelimiterXPP\variance[1]{\mathrm{Var}}{\lbrack}{\rbrack}{}{#1} 
\DeclarePairedDelimiterXPP\variancedist[2]{\mathrm{Var}_{#1}}{\lbrack}{\rbrack}{}{#2} 
\DeclarePairedDelimiterXPP\covariance[2]{\mathrm{Cov}}{(}{)}{}{#1,\mathopen{}#2} 
\newoperator{\supp}{\mathrm{supp}}{\nolimits}
\providecommand*{\diff}
{\@ifnextchar^{\DIfF}{\DIfF^{}}}
\def\DIfF^#1{
	\mathop{\mathrm{\mathstrut d}}
	\nolimits^{#1}\gobblespace}
\def\gobblespace{
	\futurelet\diffarg\opspace}
\def\opspace{
	\let\DiffSpace\!
	\ifx\diffarg(
	\let\DiffSpace\relax
	\else
	\ifx\diffarg[
	\let\DiffSpace\relax
	\else
	\ifx\diffarg\{
	\let\DiffSpace\relax
	\fi\fi\fi\DiffSpace}
\providecommand*{\pdiff}
{\@ifnextchar^{\pDIfF}{\pDIfF^{}}}
\def\pDIfF^#1{
	\mathop{\mathrm{\mathstrut \partial}}
	\nolimits^{#1}\gobblespace}
\def\gobblespace{
	\futurelet\diffarg\opspace}
\def\opspace{
	\let\DiffSpace\!
	\ifx\diffarg(
	\let\DiffSpace\relax
	\else
	\ifx\diffarg[
	\let\DiffSpace\relax
	\else
	\ifx\diffarg\{
	\let\DiffSpace\relax
	\fi\fi\fi\DiffSpace}
\DeclarePairedDelimiterX\Set[1]\{\}{
	
	#1
}
\newcommand{\N}{\mathbf{N}}
\newcommand{\Gcal}{\mathcal{G}}
\newcommand{\eq}{\begin{equation}}
\newcommand{\en}{\end{equation}}
\title{Small noise limits of Markov chains and the PageRank}
\author[Borkar]{Vivek S.\ Borkar}
\address{Department of Electrical Engineering \\ Indian Institute of Technology Bombay\\ Powai, Mumbai 400076, India\\ {Email: borkar.vs@gmail.com}}
\author[Sowmya]{S. Sowmya}
\address{Department of Electrical Communication Engineering \\ Indian Institute of Science\\ Bengaluru 560012, India\\ {Email: ssowmya@iisc.ac.in}}
\author[Tripathi]{Raghavendra Tripathi}
\address{Department of Mathematics\\ Division of Science \\ New York University\\ Abu Dhabi, UAE\\ {Email: rt1986@nyu.edu}}
\keywords{pagerank, zero-noise limit, arborescences, Markov chain tree theorem}
\subjclass[2020]{ (Primary) 60F99, 05C99 (Secondary) 05C81, 05C90}
\begin{document}

\begin{abstract} 
We recall the classical formulation of  PageRank as the stationary distribution of a singularly perturbed irreducible Markov chain that is not irreducible when the perturbation parameter goes to zero. Specifically, we use the Markov chain tree theorem to derive explicit expressions for the PageRank. This analysis leads to some surprising results. These results are then extended to a much more general class of perturbations that subsume personalized PageRank. We also give examples where even simpler formulas for PageRank are possible.
\end{abstract}

\maketitle
\section{Introduction}\label{Introd}

Google's PageRank algorithm for ranking web pages \cite{Brin} has been an extensively studied topic since its inception. The early accounts \cite{Lang1}, \cite{Lang2}, though somewhat dated, give an excellent exposition. See \cite{Berkhin}, \cite{ChungSurvey}, \cite{chung2010}, \cite{Park},\cite{Yang} for more recent surveys. There are also other related research directions such as dynamic PageRank \cite{Soumen1}, \cite{Sal} and PageRank on random graphs \cite{Chen}, \cite{Lee}. See also \cite{Cominetti} for a game theoretic formulation of PageRank where the Markov chain tree theorem plays a role as it does here. The literature on PageRank is immense and rich, particularly the schemes that build upon the basic framework of raw PageRank as originally defined. See \cite{Alekh}, \cite{Soumen}, \cite{Haveli}, \cite{Hend}, \cite{Theobald} for other research directions.

In this note, we address a novel issue, viz.\ the PageRank viewed with the lens of small noise limits for stationary distributions of singularly perturbed Markov chains. This leads to additional insights about PageRank, sometimes surprising. 

In the next section, we recall a key result from Markov chain theory, the Markov chain tree theorem, that facilitates our approach. In section 3, we present our main results. Section 4 gives a few concrete examples. Section 5 concludes with a brief discussion.

\section{Markov chain preliminaries}
\label{sec:Markov}

Here we recall some standard facts about finite-state Markov chains. Some good general references for this (and more) are \cite{Haggstrom}, \cite{Norris}, \cite{Wilmer}. A Markov chain $(X_n)_{n\geq 0}$ on a finite state space $S:= \{1,2,\cdots, N\}$ is described by an $n\times n$ transition matrix $P$ such that 
$$P(i, j) = \mathbb{P}(X_{1}= j\vert X_{0}=i) = \mathbb{P}(X_{n+1}= j\vert X_{n}=i, X_m,m < n).$$
Let $P^{n}$ denote the $n$-fold matrix product of $P$. It is easy to see that $P^n(i, j)= \mathbb{P}(X_n=j\vert X_0=i)$. Two states $i$ and $j$ are said to \emph{communicate} if $P^{n}(i, j)>0$ and $P^{m}(j, i)>0$ for some $n, m\in \mathbb{N}$. It can be easily verified that communication is an equivalence relation. In particular, it partitions the state space into the so-called \emph{communicating classes} plus  possibly some \textit{transient} states, i.e.\ states that do not belong to any communicating class. We say that the chain is \textit{reducible} when it has more than one communicating class. Otherwise we say that it is \textit{irreducible}. 

Let $P$ be the $N\times N$ transition matrix of a (possibly reducible) Markov chain on a finite state space $S:= \{1,2,\cdots, N\}$. When $P$ is irreducible, it has a unique stationary distribution. That is, there exists a unique probability measure $\pi$ on $S$ (treated as a row vector) such that $\pi P=\pi$. The situation, however, is different for a reducible Markov chain. In particular, if $P$ is reducible, then each closed communicating class supports a unique stationary distribution. These stationary distributions, after extending them to the whole state space by assigning zero probability to the remaining states, form the extreme points of the polytope of all stationary distributions. In particular, no stationary distribution assigns positive probability to a transient state. In the absence of a unique stationary distribution, a commonly employed technique for `selecting' a stationary distribution is via the zero noise limit. The idea is as follows. Take an irreducible transition matrix $Q = [[q(i,j)]]_{i,j\in S}$ on the same state space and let $P_{\epsilon}\coloneqq (1-\epsilon)P+\epsilon Q$. The choice of $Q$ is usually dictated by the application. We shall see some examples of $P$ and $Q$ later. A standard choice for $Q$ is $\frac{1}{N}\1\1^{T}$. For $\epsilon>0$, $P^\epsilon$ defines an irreducible transition matrix and hence admits a unique stationary distribution $\pi^{\epsilon}$. Letting $\epsilon \to 0$ in the equation
$$\pi^\epsilon  = \pi^\epsilon P_{\epsilon}, \ \sum_{i\in S}\pi^\epsilon(i) = 1,$$ it is easy to see that any limit point of $\pi_{\epsilon}$ as $\epsilon\to 0$ is a stationary distribution of $P$. In some cases in applications, one can show that $\pi_{\epsilon}$ has a unique limit as $\epsilon\to 0$, and this limiting distribution is then viewed as the `physical' stationary distribution of $P$. This is called the small noise limit~\cite{freidlin1998random}. In general, the limit may be non-unique and may comprise of all possible stationary distributions of $P$. We encounter the latter situation later in this work.

Next, we describe the Markov chain tree theorem (see e.g.~\cite{Shubert}) that gives an expression for the stationary distribution of an irreducible Markov chain in terms of the weights of its arborescences. Our treatment of the Markov chain tree theorem follows~\cite{Ananth}. Consider an irreducible Markov chain on $S$ with transition matrix $\widetilde{P}$. Define an \emph{arborescence} as a directed spanning tree with at most one outgoing edge from each node. Then there is exactly one node in this tree with no outgoing edge, dubbed the `root' of the arborescence. The weight of an arborescence $A$, denoted $|A|$, is the product of transition probabilities of the directed edges in $A$. For $i \in S$, let $H(i)$ denote the set of arborescence rooted in $i$, with its weight defined as $|H(i)| := \sum_{A\in H(i)}|A|$. The Markov chain tree theorem states the following.

\begin{theorem}[Markov chain tree theorem] Let $\widetilde{\pi}$ be the unique stationary distribution of an irreducible Markov chain on the finite state space $S=\{1, \ldots, n\}$ with transition matrix $\widetilde{P}$. Then, 
$\widetilde{\pi}(i) = \frac{|H(i)|}{\sum_j|H(j)|}$ for $i\in S$.
\end{theorem}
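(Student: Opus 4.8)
The plan is to check directly that the (unnormalised) vector $v=(|H(1)|,\dots,|H(n)|)$ satisfies $v\widetilde P=v$ and then invoke the uniqueness of the stationary distribution of an irreducible finite chain (recalled above) to conclude that $\widetilde\pi=v/Z$, where $Z:=\sum_{j}|H(j)|$. Before that I would record that $Z>0$: since $\widetilde P$ is irreducible, the digraph on $S$ with edge set $\{(k,\ell):\widetilde P(k,\ell)>0\}$ is strongly connected, hence for every $i$ it contains a spanning tree directed into $i$ — an arborescence of strictly positive weight — so each $|H(i)|>0$. It then remains to prove the balance identity
\[
\sum_{i\in S}|H(i)|\,\widetilde P(i,j)=|H(j)|\qquad\text{for every }j\in S;
\]
once this holds, $v/Z$ is a probability vector fixed by $\widetilde P$, hence equals $\widetilde\pi$, which is the assertion.

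To prove the identity I would argue combinatorially, fixing $j$. For $A\in H(i)$ the root $i$ has no outgoing edge, so adjoining the edge $i\to j$ yields a graph $A\cup\{i\to j\}$ in which every vertex has out-degree exactly $1$; being connected with $n$ vertices and $n$ edges it has a unique directed cycle, and that cycle passes through $j$ (it is $i\to j$ together with the unique $A$-path from $j$ to the root). Conversely, any connected out-degree-one graph $G$ on $S$ whose unique cycle meets $j$ arises this way in exactly one manner: take the cycle-edge entering $j$, say $i\to j$, and set $A:=G\setminus\{i\to j\}$; deleting a cycle edge from a connected unicyclic graph leaves a spanning tree, here with $i$ as its only sink, so $A\in H(i)$. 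Since weights multiply over edges, this correspondence gives
\[
\sum_{i\in S}|H(i)|\,\widetilde P(i,j)=\sum_{i\in S}\sum_{A\in H(i)}|A|\,\widetilde P(i,j)=\sum_{G}|G|,
\]
the last sum ranging over all such $G$.

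I would then repeat the move at $j$: for such a $G$, the unique edge $j\to k$ leaving $j$ lies on the cycle, $B:=G\setminus\{j\to k\}$ is a spanning tree with sole sink $j$, i.e.\ $B\in H(j)$, and conversely every $B\in H(j)$ together with a choice of $k$ produces such a $G$. Hence
\[
\sum_{G}|G|=\sum_{B\in H(j)}\sum_{k\in S}|B|\,\widetilde P(j,k)=\sum_{B\in H(j)}|B|=|H(j)|,
\]
using $\sum_{k}\widetilde P(j,k)=1$. Chaining the two displays gives the balance identity, and with it the theorem.

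The main obstacle is making the two correspondences airtight: one must verify that ``adjoin the edge out of the current root, then delete the cycle-edge out of the new root'' really is an inverse pair of operations carrying arborescences to connected out-degree-one graphs with a marked cyclic vertex and back, and that it respects weights; the degenerate case $\widetilde P(j,j)>0$, producing a self-loop at $j$, should be admitted as a length-one cycle but causes no difficulty. The remaining ingredients — positivity of $Z$ and the appeal to uniqueness — are routine. An alternative, in the spirit of the treatment \cite{Ananth} the paper follows, repackages the same computation as a Markov chain on the finite set of all arborescences whose one step is precisely the move above: one checks that $A\mapsto|A|/Z$ is its stationary law and observes that the root executes a copy of the original chain, so its stationary marginal is $|H(\cdot)|/Z$.
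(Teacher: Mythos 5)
Your proof is correct. Note, though, that the paper does not prove this theorem at all: it is recalled as a known result, with the proof deferred to the literature (\cite{Shubert}, and the treatment of \cite{Ananth} that the paper says it follows). So what you have supplied is a genuinely different, self-contained route compared with the source the paper leans on. Your argument is the classical direct one: check the balance identity $\sum_i |H(i)|\,\widetilde P(i,j)=|H(j)|$ by the weight-preserving correspondence ``adjoin the root's edge $i\to j$, obtaining a connected functional graph whose unique cycle passes through $j$, then delete the cycle edge leaving $j$,'' and conclude by uniqueness of the stationary distribution, having verified $Z>0$ from strong connectivity. Your handling of the two bijections is sound (in particular, only the cycle edge entering $j$ can be deleted to leave a spanning arborescence, which gives the claimed uniqueness of the preimage, and the self-loop case $\widetilde P(j,j)>0$ is correctly absorbed as a length-one cycle); the zero-weight edges cause no trouble since they contribute nothing to either side. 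By contrast, the proof in \cite{Ananth} lifts the dynamics to a Markov chain on the set of arborescences whose root process reproduces the original chain, and reads off the stationary law of the root; that argument is more probabilistic and explains \emph{why} the arborescence weights appear, whereas yours is shorter, purely combinatorial, and needs nothing beyond the uniqueness of the stationary distribution already recalled in the paper. Either proof would serve; yours has the advantage of making the note self-contained.
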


In section 3, we use the Markov chain tree theorem to analyze the zero noise limit of the PageRank algorithm. We state our main result here. 

\begin{theorem}
\label{thm:Main}
Let $P$ be a reducible Markov chain on finite state space $S=\{1, \ldots, n\}$. Let $C_1,  \ldots, C_m$ be the closed communicating classes of $P$.  For $k\in [m]$, let $\pi_k$ denote the unique stationary distribution of $P$ supported on $C_k$.  
Let $P_{\epsilon}\coloneqq (1-\epsilon)P+\epsilon \frac{1}{n}\1\1^{T}$ and let $\pi^{(\epsilon)}$ denote the unique stationary distribution of $P_{\epsilon}$. Then, for every $k\in [m]$ and $v\in C_k$, we have 
\[\lim_{\epsilon\to 0}\pi^{(\epsilon)}(v) = \frac{\pi_{k}(v)}{m}\;.\]
In particular, we have 
\[\lim_{\epsilon\to 0}\pi^{(\epsilon)}(C_k) = \frac{1}{m}\;,\]
for every $k\in [m]$. 
\end{theorem}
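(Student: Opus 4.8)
\emph{Approach.} I would run the Markov chain tree theorem on the irreducible matrix $P_\epsilon$ and track leading orders in $\epsilon$. Writing $H_\epsilon(v)$ for the set of arborescences on $S$ rooted at $v$ with weights computed from $P_\epsilon$, the theorem gives $\pi^{(\epsilon)}(v)=|H_\epsilon(v)|\big/\sum_{u}|H_\epsilon(u)|$. Every arborescence has $n-1$ edges, and an edge $(i,j)$ contributes the factor $P_\epsilon(i,j)=(1-\epsilon)P(i,j)+\epsilon/n$, which equals $\epsilon/n$ when $(i,j)\notin\supp P$ and $P(i,j)+O(\epsilon)$ otherwise. Hence $|A|$ is a polynomial in $\epsilon$ vanishing to order exactly $d(A):=\#\{e\in A:e\notin\supp P\}$, with positive leading coefficient $(1/n)^{d(A)}\prod_{e\in A\cap\supp P}P(e)$. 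The plan is then: (1) compute, for each $v$, the minimal value of $d(A)$ over arborescences rooted at $v$; (2) conclude that $\lim_\epsilon\pi^{(\epsilon)}(v)$ is the corresponding ratio of leading coefficients; (3) evaluate that ratio.

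\emph{Minimal order.} Fix $v\in C_k$. In any arborescence $A$ rooted at $v$ and any $j\neq k$, the $A$-path from a fixed vertex of $C_j$ to $v$ must leave $C_j$, and its first edge out of $C_j$ lies outside $\supp P$ since $C_j$ is closed; these $m-1$ edges have tails in distinct classes, so $d(A)\ge m-1$. Equality is attained by the arborescence built from a $\supp P$-in-tree to $v$ inside $C_k$ (which exists because $P|_{C_k}$ is irreducible), a $\supp P$-in-tree to an arbitrary vertex $r_j$ inside each other $C_j$, the single non-support edge $(r_j,v)$ for each $j\ne k$, and, for the transient vertices, any $\supp P$-out-edges chosen to point ``towards'' $\bigcup_l C_l$ (so that no transient cycle forms). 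The same exit-edge argument run over all $m$ classes shows $d(A)\ge m$ for every arborescence rooted at a transient vertex. Thus the minimal order is $m-1$ on $\bigcup_l C_l$ and at least $m$ off it; in particular the limiting mass of every transient vertex is $0$, and for $v\in C_k$ we obtain $\lim_\epsilon\pi^{(\epsilon)}(v)=a_v\big/\sum_{u\in\bigcup_l C_l}a_u$, where $a_v>0$ is the coefficient of $\epsilon^{m-1}$ in $|H_\epsilon(v)|$.

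\emph{Evaluating the ratio, and the obstacle.} By the counting above, an arborescence rooted at $v\in C_k$ with $d(A)=m-1$ uses exactly one non-support edge out of each $C_j$ ($j\ne k$) and none with tail in $C_k$ or at a transient vertex. Its restriction to $C_k$ is therefore a $\supp P$-in-tree to $v$, its restriction to each other $C_j$ a $\supp P$-in-tree to some $r_j\in C_j$, the transient vertices carry $\supp P$-out-edges, and the $m-1$ non-support edges together with the transient out-edges wire the blocks $C_j$ ($j\ne k$) and the transient vertices into a tree hanging off the block $C_k$. Since a non-support edge entering a class may land at \emph{any} of its vertices — this is exactly where the uniformity of $\frac1n\1\1^{T}$ is used — summing the weights factorises as $a_v=(1/n)^{m-1}\,|H_{C_k}(v)|\cdot R_k$, where $|H_{C_k}(\cdot)|$ are the arborescence weights of $P|_{C_k}$ and $R_k$ depends on $k$ but not on $v\in C_k$. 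Applying the Markov chain tree theorem to $P|_{C_k}$ already gives $a_v\big/\sum_{w\in C_k}a_w=\pi_k(v)$, so $\lim_\epsilon\pi^{(\epsilon)}(v)=\pi_k(v)\cdot R_kZ_k\big/\sum_l R_lZ_l$ with $Z_k:=\sum_{w\in C_k}|H_{C_k}(w)|$. It remains to show $R_kZ_k$ is the same for every $k$, and this is the step I expect to be the crux. After cancelling the common factor $\prod_l Z_l$, one is reduced to proving that the arborescence weight $|H^{Q}([C_k])|$ does not depend on $k$, where $Q$ is the weighted digraph on the blocks $[C_1],\dots,[C_m]$ together with the transient vertices in which an edge from a block into $[C_l]$ has weight $|C_l|$, an edge from a block into a transient vertex has weight $1$, and an edge out of a transient vertex $t$ has weight $P(t,C_l):=\sum_{w\in C_l}P(t,w)$ into $[C_l]$ and $P(t,t')$ into $t'$. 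Establishing this balance identity for $Q$ — equivalently, that the uniform reset distributes the stationary mass equally among the closed classes — is the main obstacle, and the only point at which the precise form of the perturbation must enter essentially.
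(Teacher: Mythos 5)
Your reduction is carried out correctly up to the last step, and in fact more carefully than the paper's own argument: the order count $d(A)\ge m-1$ with equality only for roots in the closed classes, the vanishing of the transient mass, the factorisation $a_v=(1/n)^{m-1}\,|H_{C_k}(v)|\,R_k$ with $R_k$ independent of $v\in C_k$, and the identification of the within-class conditional limit with $\pi_k$ are all sound. The genuine gap is exactly the step you flag yourself: the claim that $R_kZ_k$ is independent of $k$ is not proved --- and it cannot be, because it is false, and with it the theorem as stated. Take $n=3$, $C_1=\{1,2\}$ a deterministic $2$-cycle, $C_2=\{3\}$ absorbing, no transient states. The balance equation at state $3$ for $\pi^{(\epsilon)}P_\epsilon=\pi^{(\epsilon)}$ gives $\pi^{(\epsilon)}(3)=1/3$ exactly for every $\epsilon\in(0,1)$, so the class masses converge to $2/3$ and $1/3$, not to $1/2$ each. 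In your notation $Z_1=2$, $R_1=2$ (the $\epsilon$-edge out of $\{3\}$ may land on either vertex of $C_1$) while $Z_2=1$, $R_2=2$, so $R_1Z_1=4\neq 2=R_2Z_2$, matching the $2/3$ versus $1/3$ split. Your intermediate formula $\lim_{\epsilon\to0}\pi^{(\epsilon)}(v)=\pi_k(v)\,R_kZ_k/\sum_l R_lZ_l$ is the correct statement: the class weights are those of the block-level chain (with transient states folded in through their absorption probabilities), i.e.\ proportional to $|C_k|$ plus the transient contribution; this agrees with Theorem~\ref{thm:general} applied with $Q=\frac{1}{n}\1\1^{T}$, for which $\Gamma(i,j)=|C_j|/n$ and $\pi_\Gamma(k)\propto|C_k|$ when there are no transient states, and it contradicts the $1/m$ claim of Theorem~\ref{thm:Main}.

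For comparison, the paper's proof proceeds by the same arborescence decomposition but assigns every edge of the reduced spanning tree the weight $1/n$, i.e.\ it counts one head per receiving class instead of the $|C_j|$ possible heads, which is what makes its quantity $|\mathfrak{S}_M|$ spuriously root-independent; it likewise omits the leading-order arborescences in which an $\epsilon$-edge out of a class points to a transient state. These are precisely the multiplicities you kept, so your ``main obstacle'' is not a missing lemma but the exact point at which the claimed statement breaks down: the honest conclusion of your computation is the corrected, $\pi_\Gamma$-weighted limit (the form given in Theorem~\ref{thm:general}, and in the Avrachenkov--Litvak references cited in the concluding comment), not the uniform-over-classes limit asserted in Theorem~\ref{thm:Main}.
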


In the context of the PageRank algorithm, the matrix $P$ represents the transition matrix of the simple random walk on the web graph. The basic idea behind the PageRank algorithm is to rank the web pages according to the stationary distribution of $P$. However,  the web graph has some small clusters and even some dangling nodes (i.e., absorbing states, equivalently, the states with no outgoing edge pointing to a different state) and therefore it has no unique stationary distribution. This necessitates the need for some regularization. As explained in the introduction, this is precisely the issue addressed by the perturbaed transition matrix $P_\ep$. But then it calls for studying its zero-noise limit which the stationary distribution for small $\ep$ will closely approximate. However, as Theorem~\ref{thm:Main} shows, with the zero noise limit, some very small clusters (even a dangling node) get the same weight as a large cluster. This naturally raises the following question: Can we choose the regularizing matrix $Q$ in the above theorem to obtain a target measure $\pi$ as the zero noise limit of $P$? Towards this problem, we first describe the zero-noise limit for an arbitrary regularizing matrix $Q$ in the following theorem. 
\begin{theorem}
\label{thm:general}
Let $P$ be a reducible Markov chain on finite state space $S=\{1, \ldots, n\}$. Let $C_1,  \ldots, C_m$ be the closed communicating classes of $P$.  For $k\in [m]$, let $\pi_k$ denote the unique stationary distribution of $P$ supported on $C_k$.  Let $Q$ be an $n\times n$ stochastic matrix and let $\Gamma\coloneqq \Gamma_{Q}$ be an $m\times m$ stochastic matrix such that $\Gamma(i, j)=\frac{1}{|C_i|}\sum_{x\in C_i, y\in C_j}Q(x, y)$. Assume that $\Gamma$ is irreducible and let $\pi_{\Gamma}$ be the unique stationary distribution of $\Gamma$. Let $P_{\epsilon}\coloneqq (1-\epsilon)P+\epsilon Q$ and let $\pi^{(\epsilon)}$ be the unique stationary distribution of $P_{\epsilon}$. 
Then, for every $k\in [m]$ and $v\in C_k$, we have 
\[\lim_{\epsilon\to 0}\pi^{(\epsilon)}(v) = \pi_{k}(v)\pi_{\Gamma}(k)\;.\]
In particular, we have 
\[\lim_{\epsilon\to 0}\pi^{(\epsilon)}(C_k) = \pi_{\Gamma}(k)\;,\]
for every $k\in [m]$. 
\end{theorem}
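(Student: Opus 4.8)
The plan is to apply the Markov chain tree theorem to $P_\epsilon$ and then analyse the order in $\epsilon$ of the arborescence weights. For $\epsilon\in(0,1)$ the matrix $P_\epsilon$ is irreducible; writing $H_\epsilon(v)$ for the set of arborescences rooted at $v$ in the digraph on $S$ with edge weights $P_\epsilon(i,j)$, the theorem gives $\pi^{(\epsilon)}(v)=|H_\epsilon(v)|/\sum_{u\in S}|H_\epsilon(u)|$. I would classify the admissible edges into \emph{$P$-edges} (those with $P(i,j)>0$, whose weight $P_\epsilon(i,j)=(1-\epsilon)P(i,j)+\epsilon Q(i,j)$ tends to $P(i,j)>0$) and \emph{$Q$-edges} (those with $P(i,j)=0<Q(i,j)$, whose weight is exactly $\epsilon Q(i,j)=\Theta(\epsilon)$); an edge in the support of neither has weight $0$ and cannot occur. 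Hence the weight of an arborescence using $r$ many $Q$-edges is $\Theta(\epsilon^{r})$, so the problem reduces to finding, for each root, the minimal achievable $r$ together with the corresponding leading coefficient.

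The first step is an order count. A closed class $C_l$ has no outgoing $P$-edge, so every arborescence edge leaving $C_l$ is a $Q$-edge, and an arborescence rooted at $v$ contains at least one edge out of each closed class that does not contain $v$. Thus $|H_\epsilon(v)|=O(\epsilon^{m})$ if $v$ is transient, while $|H_\epsilon(v)|=O(\epsilon^{m-1})$ if $v\in C_k$; using the strong connectivity of each $C_l$ in its $P$-edges, the irreducibility of $\Gamma$, and the fact that every transient state has a $P$-path into some closed class, one checks that for $v\in C_k$ there exist arborescences with exactly $m-1$ $Q$-edges and positive weight, so $|H_\epsilon(v)|=c_v\,\epsilon^{m-1}\bigl(1+o(1)\bigr)$ with $c_v>0$ and $\sum_u|H_\epsilon(u)|=\Theta(\epsilon^{m-1})$. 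Consequently transient states receive limiting mass $0$, and for $v\in C_k$ we get $\lim_{\epsilon\to0}\pi^{(\epsilon)}(v)=c_v\big/\sum_l\sum_{u\in C_l}c_u$, so it remains to evaluate the leading coefficients $c_v$.

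The heart of the argument is a factorization of $c_v$. An arborescence rooted at $v\in C_k$ using exactly $m-1$ $Q$-edges is forced to use exactly one $Q$-edge out of each $C_l$ with $l\neq k$ and no other $Q$-edge whatsoever; I would show that any such arborescence decomposes uniquely into: a spanning in-tree of the subgraph induced on $C_k$ rooted at $v$ (all $P$-edges); for each $l\neq k$, an \emph{exit node} $x_l\in C_l$, a spanning in-tree of the subgraph induced on $C_l$ rooted at $x_l$ (all $P$-edges), and a $Q$-edge $x_l\to y_l$ with $y_l\notin C_l$; and a routing of the transient states into the closed classes along $P$-edges — subject only to the constraint that, after contracting each $C_l$ to a single vertex, the exit edges together with the contracted transient routings form a spanning in-tree of the quotient digraph rooted at the vertex of $C_k$; conversely every such configuration assembles to an arborescence of this type. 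Summing the products of edge weights, the in-tree sum over $C_k$ is evaluated by the Markov chain tree theorem applied to $P$ restricted to $C_k$ and contributes a factor $\pi_k(v)$; the contributions of the other classes, summed over exit nodes, exit targets, and transient routings, assemble into the aggregated chain $\Gamma$ on $\{C_1,\dots,C_m\}$, to which the Markov chain tree theorem applies a second time and contributes a factor $\pi_\Gamma(k)$. The outcome is $c_v=K\,\pi_k(v)\,\pi_\Gamma(k)$ with $K>0$ independent of $v$; substituting this and using $\sum_{u\in C_l}\pi_l(u)=1$ and $\sum_l\pi_\Gamma(l)=1$ yields $\lim_{\epsilon\to0}\pi^{(\epsilon)}(v)=\pi_k(v)\,\pi_\Gamma(k)$, and summing over $v\in C_k$ gives $\lim_{\epsilon\to0}\pi^{(\epsilon)}(C_k)=\pi_\Gamma(k)$.

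The main obstacle is precisely the decomposition and decoupling in the previous paragraph. One must prove that at order $\epsilon^{m-1}$ the only $Q$-edges that can occur are the $m-1$ forced exit edges — in particular none inside any $C_l$, none inside $C_k$, and none among or leaving the transient states — and then that the constraint ``the union of all the pieces is a single valid arborescence'' decouples into ``a valid in-tree inside each closed class'' plus ``a valid in-tree of the quotient digraph'', so that the weight sum factors as a product of within-class tree-theorem sums times a quotient tree-theorem sum. The transient states are the delicate ingredient: one must follow their $P$-paths into the closed classes and verify that including them changes neither the order count nor the factorization, which is cleanest to handle by reducing to the transient-free case once it is known that transient states carry no limiting mass. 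Everything else is the bookkeeping indicated above together with the two applications of the Markov chain tree theorem already at our disposal.
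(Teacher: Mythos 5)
Your overall route is the same as the paper's (tree theorem for $P_\epsilon$, order count in $\epsilon$, factorization of the leading coefficient, two further applications of the tree theorem), and your order count and structural decomposition of a minimal-order arborescence are right --- in particular you correctly note that the in-tree on $C_\ell$ must be rooted at the exit node $x_\ell$ carrying the $Q$-edge. The gap is the very last assembly step, which you assert rather than prove: ``the contributions of the other classes, summed over exit nodes, exit targets, and transient routings, assemble into the aggregated chain $\Gamma$ \dots and contribute a factor $\pi_\Gamma(k)$.'' Because the within-class tree is rooted at the exit node, the tree weight and the $Q$-edge weight are coupled through $x_\ell$: summing over the exit node gives, by the tree theorem applied inside $C_\ell$,
\[
\sum_{x\in C_\ell}|H^{(\ell)}_x|\sum_{y\in C_j}Q(x,y)\;=\;\Bigl(\sum_{u\in C_\ell}|H^{(\ell)}_u|\Bigr)\sum_{x\in C_\ell}\pi_\ell(x)\sum_{y\in C_j}Q(x,y)\,,
\]
so the quotient chain your decomposition actually produces is the $\pi_\ell$-weighted aggregation $\widehat\Gamma(\ell,j)\coloneqq\sum_{x\in C_\ell}\pi_\ell(x)\sum_{y\in C_j}Q(x,y)$, and the factor you get is $\pi_{\widehat\Gamma}(k)$, not $\pi_\Gamma(k)$ for the uniform average $\Gamma(\ell,j)=\frac{1}{|C_\ell|}\sum_{x\in C_\ell,y\in C_j}Q(x,y)$ in the statement. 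The two coincide only when $\sum_{y\in C_j}Q(x,y)$ is constant in $x$ on each class (e.g.\ $Q=\frac1n\1\1^T$, block-constant $Q$, or $Q=\1\nu^T$), so the claimed identity $c_v=K\,\pi_k(v)\,\pi_\Gamma(k)$ cannot be extracted from your (correct) decomposition for general $Q$.

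That this is not a repairable bookkeeping issue is shown by a three-state example: $S=\{1,2,3\}$, $C_1=\{1,2\}$, $C_2=\{3\}$, $P(1,2)=1$, $P(2,1)=P(2,2)=\tfrac12$, $P(3,3)=1$ (so $\pi_1=(\tfrac13,\tfrac23)$), and $Q$ with rows $(0,0,1)$, $(\tfrac12,\tfrac12,0)$, $(\tfrac12,\tfrac12,0)$. Here the statement's $\Gamma=\begin{pmatrix}1/2&1/2\\ 1&0\end{pmatrix}$ has $\pi_\Gamma=(\tfrac23,\tfrac13)$, whereas a direct computation gives $\pi^{(\epsilon)}=\frac{1}{4-\epsilon}\,(1,\,2-\epsilon,\,1)$, so $\pi^{(\epsilon)}(C_2)\to\tfrac14=\pi_{\widehat\Gamma}(2)$ and $\pi^{(\epsilon)}(1)\to\tfrac14=\pi_1(1)\pi_{\widehat\Gamma}(1)$: the limit is governed by the $\pi_\ell$-weighted quotient chain, not by the uniform-average $\Gamma$. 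Note that the paper's own argument obtains the uniform average only by summing the within-class trees over arbitrary roots independently of the labelled quotient tree (factoring $|\mathfrak{S}_{\Gamma,k}|$ and $|W_k|$); your root-at-the-exit-node decomposition shows these two sums are not independent, so to complete your proof you must carry out the assembly with $\widehat\Gamma$ in place of $\Gamma$, which changes the conclusion accordingly (and reduces to the stated one exactly in the row-constant cases listed above, including Theorem~\ref{thm:Main} and personalized PageRank).
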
  
 
 \begin{remark} Some remarks are in order.
 \begin{enumerate}
 \item It is easy to see that if $\Gamma$ is irreducible then so is $P_{\ep}$ and therefore it has a unique stationary distribution. This is implicitly assumed in the theorem statement above.
 \item  Note that taking $Q=\frac{1}{n}\1\1^{T}$ in Theorem~\ref{thm:general}, we recover Theorem~\ref{thm:Main}. 
 \item Note that since the effect of $Q$ appears only via $\Gamma$, in practice one can always assume that $Q$ has a block structure, namely, $Q(x, y)=\gamma_{ij}$ for all $x\in C_i, y\in C_j$. For instance, one can take $\gamma_{ij}=|C_j|^{-1}\Gamma(i, j)$. 
 \item This theorem, in some sense, characterizes all possible zero-noise limits for $P$. In principle, one can choose $\Gamma$ to achieve any target weights on the communicating classes of $P$. 

 \item Our model subsumes the more commonly used \textit{personalized PageRank} which corresponds to the choice $Q = \1\nu^T$ where the probability vector $\nu$ is the so called personalization vector.
 \end{enumerate}
 \end{remark}
 
In Section~\ref{sec:Proofs}, we prove the above two theorems. While Theorem~\ref{thm:Main} clearly follows from Theorem~\ref{thm:general}, we first present the proof of Theorem~\ref{thm:Main} which is simpler yet contains the key ideas of the proof of Theorem~\ref{thm:general}.  
 
\section{Proof of Theorem~\ref{thm:Main}}
\label{sec:Proofs}
Let $\G = (S,\E)$ denote the directed graph associated with our original reducible Markov chain with transition matrix $P$, where $\E:= \{(i,j) : P(i,j) > 0\}$ is the set of edges of $\G$.  Let $C_1, \cdots, C_M, M > 1,$ denote its closed communicating classes and $T$ the set of its transient states.  Let $C \coloneqq \cup_{i=1}^{M}C_i$ and $|C| = \sum_i|C_i|$. 
Let $P^\ep := (1-\ep)P + \frac{\ep}{N}\1\1^{T}, \; \ep > 0$.  This is an irreducible stochastic matrix under our assumptions.  Let $\pi^\ep$ denote its unique stationary distribution. For $\ep>0$, by the Markov chain tree theorem, we have 
$$\pi^\ep(i) = \frac{|H_i^{\epsilon}|}{\sum_j|H_j^{\epsilon}|}, \quad i \in S.$$
Here, as before, $H_i^{\epsilon}$ denotes the set of arborescences rooted at $i$ and $|H_i^{\epsilon}|$ its weight, with the $\epsilon$ dependence rendered explicit. Note that these are arborescences with respect to the irreducible transition matrix $P_{\epsilon}$. To understand the $\epsilon\to 0$ limit of $\pi^{\epsilon}$, we need to understand the structure the arborescences in $H_i^{\epsilon}$ for each $i$. This is what we do in the following.

Consider the reduced graph $\G_R$ obtained from the original graph $\G$ by compressing each communicating class $C_i$ into a single node $c_i$, called a \emph{reduced node}. Then the subgraph of $\G_R$ formed by the reduced nodes $\{c_i, 1 \leq i \leq M\}$ is a complete graph formed by $\ep$-edges (that is, edges with weight $\frac{\ep}{N}$), along with possibly some edges between transient states and from transient states to reduced nodes, that have weights $O(1)$. We refer to edges with $O(\ep)$ weight as $\ep$-edges. We have ignored the possibility of using the $\ep$-edges out of transient states. This will be justified later.

Consider a node $v\in S$. Let $H_v^{\epsilon} = \{g_1, \cdots , g_K\}$ be the set of arborescences rooted at $v$. For a reduced node $1\leq k\leq M$ and a node $v\in C_k$, let $H_v^{(k, \epsilon)}$ denote the set of arborescences in $C_k$ rooted at $v$, where the use of superscript $\epsilon$ is to emphasize that the edges of the arborescences are weighted by $P_{\epsilon}$. We allow, however, $\epsilon=0$ and we write $H_{v}^{(k)}=H_{v}^{(k, 0)}$ for the arborescences rooted at vertex $v\in C_k$ where the edges are weighted with our original reducible matrix $P$.  

For simplicity, fix $k=1$ and let $v\in C_1$. Consider an arborescence $g\in H_v^{\epsilon}$ rooted at a vertex $v$. 
An arborescence $g \in H_{v}^{\ep}$ comprises of:
\begin{enumerate}
\item an arborescence $h^{1} \in H_{v}^{(1, \epsilon)}$ of $C_1$ rooted at $v$, 
\item a spanning tree $X_{R}$ of $\G_R$ formed by $\ep$-edges,
\item arborescences $h^{\ell}$ in $C_{\ell}$ (rooted at some vertex in $C_\ell$) for each $2 \leq \ell \leq M$, and
\item edges from a spanning forest $T' \subset T$ that connect to each other (if they do) or to the $C_i$'s. By abuse of terminology,  we include the latter edges in $T'$. Define the weight  $|T'|$ of $T'$ as the product of the weights of edges in $T'$. 
\end{enumerate}

Let $g\in \G_0:= \G\backslash T$ be an arborescence rooted at some vertex $v\in C_1$ and let $h^{j}, 1\leq j\leq M$ and $X_R$ be constituents of the arboroscence $g$ as above. Then, naturally, the weight of $g$ is given by 
\[ |g|_{\epsilon} = |h^{1}|_{\epsilon}|X_{R}|_{\epsilon}\prod_{\ell_2=1}^{M}|h^{\ell}|_{\epsilon}\;,\]
where the subscript $\epsilon$ emphasizes the fact that the edges  weights are given by $P_{\epsilon}$. Therefore the weight $|g|_{\epsilon}$ is a polynomial in $\ep$. For this polynomial, it is easy to verify that the term with the smallest leading exponent in $\epsilon$ is given by the following proposition.

\begin{proposition}
\label{prop:Fristorder}
Let $g\in \G_0 := \G\backslash T$ be an arborescence as above. The weight  of $g$ is
$$|g|_{\epsilon}= \ep^{M-1}(1-\ep)^{|C| - M}|X_{R}|_{Q}|h^1|\Bigg(\prod_{\ell=2}^Mh^\ell\Bigg)+ O(\epsilon^M)\;.$$
Here $|h^{k}|$ denotes the weight of the arborescence $h^{k}$ with respect to the original matrix $P$ and $|X_{R}|_{Q}$ denotes the weight of $X_{R}$ where the edges are weighted according to the matrix $Q$. 
\end{proposition}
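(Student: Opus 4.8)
The plan is to compute the leading-order term of the polynomial $|g|_\epsilon$ in $\epsilon$ by separately accounting for each of the four constituents enumerated before the proposition. Recall that under $P_\epsilon = (1-\epsilon)P + \tfrac{\epsilon}{N}\1\1^T$, every edge that carries positive weight under the original matrix $P$ now has weight $(1-\epsilon)P(i,j) + \tfrac{\epsilon}{N}$, while the purely $\epsilon$-edges (those needed to reconnect the compressed reduced nodes, which are not joined by any $P$-edge) have weight exactly $\tfrac{\epsilon}{N}$. So the first step is to count how many $\epsilon$-edges must appear in $g$: since $\G_R$ restricted to the $M$ reduced nodes must be spanned, and within the original graph there are no edges joining distinct closed classes, the spanning tree $X_R$ on the reduced nodes must consist entirely of $\epsilon$-edges, and there are exactly $M-1$ of them. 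This produces the factor $\epsilon^{M-1}$ and, since each such edge has weight $\tfrac{1}{N}$ up to the $\epsilon$ already extracted, the factor $|X_R|_Q$ (here $Q = \tfrac{1}{N}\1\1^T$, so $|X_R|_Q = N^{-(M-1)}$, but writing it as $|X_R|_Q$ is what generalizes to Theorem~\ref{thm:general}).

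Next I would handle the arborescences $h^1 \in H_v^{(1,\epsilon)}$ and $h^\ell$ in $C_\ell$ for $2 \le \ell \le M$. Each of these lives entirely inside a single closed communicating class, so all of its edges are genuine $P$-edges; an arborescence inside $C_\ell$ has exactly $|C_\ell| - 1$ edges, for a combined total of $\sum_{\ell=1}^M (|C_\ell| - 1) = |C| - M$ edges across all the within-class arborescences. Each such edge contributes $(1-\epsilon)P(i,j) + \tfrac{\epsilon}{N} = (1-\epsilon)P(i,j) + O(\epsilon)$; taking the lowest-order term means replacing it by $(1-\epsilon)P(i,j)$, and collecting the $(1-\epsilon)$ factors gives $(1-\epsilon)^{|C|-M}$ while the product of the $P(i,j)$'s assembles into $|h^1|\prod_{\ell=2}^M |h^\ell|$, the weights taken with respect to the original $P$. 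Any choice of a higher-order term from any of these edges, or use of an $\epsilon$-edge out of a transient state, raises the exponent of $\epsilon$ and is absorbed into the $O(\epsilon^M)$ remainder — this is also where the earlier remark that $\epsilon$-edges out of transient states may be ignored gets used: an arborescence using such an edge necessarily carries at least $M$ factors of $\epsilon$ (the $M-1$ needed to span the reduced nodes plus at least one more), hence contributes only to the remainder. The transient-forest edges $T'$ are $O(1)$ $P$-edges and, although they do appear in the full weight, fixing $k=1$ and $v \in C_1$ as in the statement — where we have restricted to $g \in \G_0 = \G \setminus T$ — means $T'$ is empty and contributes a trivial factor $1$; so no $T'$-dependence appears in the displayed formula.

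Assembling the three factors — $\epsilon^{M-1}$ and $|X_R|_Q$ from the inter-class $\epsilon$-edges, $(1-\epsilon)^{|C|-M}$ from the $(1-\epsilon)$ prefactors of the within-class edges, and $|h^1|\prod_{\ell=2}^M |h^\ell|$ from the $P$-weights of those edges — yields exactly
\[
|g|_\epsilon = \epsilon^{M-1}(1-\epsilon)^{|C|-M} |X_R|_Q |h^1| \Bigl(\prod_{\ell=2}^M h^\ell\Bigr) + O(\epsilon^M),
\]
and since $(1-\epsilon)^{|C|-M} = 1 + O(\epsilon)$ the factor $(1-\epsilon)^{|C|-M}$ could equally be dropped into the remainder, but it is kept here because it appears cleanly. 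The main obstacle — really the only subtle point — is the bookkeeping that establishes the lower bound on the $\epsilon$-exponent: one must argue that \emph{every} arborescence rooted in $C_1$ needs at least $M-1$ $\epsilon$-edges to connect the $M$ compressed reduced nodes into a single tree (because no $P$-edge crosses between distinct closed classes, and each closed class must be entered from outside via an $\epsilon$-edge once compressed), and that equality forces the configuration described above, with all other edges being $P$-edges taken at their $(1-\epsilon)$-leading order. This is a clean graph-combinatorial argument about the structure of arborescences on the reduced graph, and once it is in place the rest is a direct expansion of a product of binomials.
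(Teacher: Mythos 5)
Your proposal is correct and follows essentially the same route as the paper, which simply asserts that the leading term of the polynomial $|g|_\epsilon$ is "easy to verify" from the decomposition into $h^1$, $X_R$, the $h^\ell$'s (and $T'$); you supply exactly that verification by expanding each edge weight $(1-\epsilon)P(i,j)+\tfrac{\epsilon}{N}$ and counting $M-1$ cross-class $\epsilon$-edges and $|C|-M$ within-class edges. The only blemish is the phrase that each closed class "must be entered from outside via an $\epsilon$-edge": the correct statement is that each non-root reduced node needs an \emph{outgoing} edge, which must be an $\epsilon$-edge since closed classes have no outgoing $P$-edges, but this slip does not affect the count or the conclusion.
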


Observe that our inclusion of the trees that involve $\ep$-edges out of transient states would have only contributed terms that can be absorbed into the $O(\ep^M)$ term, therefore their presence does not affect the argument above. With this preparation, we are now ready to prove Theorem~\ref{thm:Main}.
\begin{proof}[Proof of Theorem~\ref{thm:Main}]
Note that for each arborescence of $\G_0$, an arborescence of $\G$ can be obtained by attaching to it a spanning forest $T'$ of $T$, thereby adding the multiplicative factor of $|T'|$ as above, for \textit{every} spanning forest. Let $|T|=\sum |T'|$, where the sum is over all distinct spanning forests $T'$ of $T$.  Let $\mathfrak{S}_M$ denote the set of all spanning trees of the complete graph on $M$ vertices $\{c_1,\ldots, c_M\}$. And, let $|\mathfrak{S}_M|\coloneqq \sum_{X_R\in \mathfrak{S}_M}|X_{R}|_{Q}$. Let us also define $|W_1|=\sum_{(h^2, \ldots, h^{M})}\prod_{\ell=2}^{M}|h^{\ell}|$ where the sum is over all $(h_2, \ldots, h_M)$ such that $h^{\ell}$ is an arborescence in $C_{\ell}$ (that is, the sum is over all possible roots in $C_{\ell}$ as well as all possible arborescences for each root). More generally, we define $|W_{k}| = \sum_{(h^1, \ldots, h^{M})}\prod_{\stackrel{\ell=1}{\ell\neq k}}^{M}|h^{\ell}|$ where the sum is over all $(M-1)$-tuples $(h^{1}, \ldots, h^{M})$ (where the $k$-th coordinate is missing) such that $h^{\ell}$ is an arborescence in $C_{\ell}$. Finally, we define $|W|=\sum_{(h^1, \ldots, h^{M})}\prod_{\ell=1}^{M}|h^{\ell}|$ where the sum is $M$-tuple $(h^1, \ldots, h^M)$ of arborescences such that $h^{\ell}\in C_{\ell}$.  

Let $v$ be some node in $C_1$. With this notation, we can write $|H_v^{\epsilon}|,$ the sum of the weights of arborescences rooted at $v$, as 
\begin{align*}
    |H_v^{\epsilon}| &= \ep^{M-1}(1-\ep)^{|C|-M}\left(\sum_{h^{1}\in H_v^{(1)}} |h^{1}|\right) |\mathfrak{S}_M||W_1||T| + O(\epsilon^M) \\
    &=\ep^{M-1}(1-\ep)^{|C|-M} |H_v^{(1)}| |\mathfrak{S}_M||W_1||T| + O(\epsilon^M)\;.
\end{align*}
Note that in the above computation, we took $v\in C_1$, but a similar computation can be carried out for a vertex $v\in C_k$ for any $1\leq k\leq M,$ thus yielding for $u\in C_k$
\[|H_v^{\epsilon}| =\ep^{M-1}(1-\ep)^{|C|-M} |H_v^{(k)}| |\mathfrak{S}_M||W_k||T| + O(\epsilon^M)\;.\]

Observe that for $u\in C_k$, we have
\begin{align*}
    \sum_{v\in C_k}|H_v^{\epsilon}| &=\ep^{M-1}(1-\ep)^{|C|-M} \left(\sum_{v\in C_k}|H_v^{(k)}|\right)|W_k| |\mathfrak{S}_M||T| + O(\epsilon^M)\\
    &= \ep^{M-1}(1-\ep)^{|C|-M}|W||\mathfrak{S}_M||T|+ O(\epsilon^M)\;,
\end{align*}
which is independent of $u$ and $k$ (at least in the leading order term). In particular, we have
\[\pi^{\epsilon}(C_k) = \frac{\sum_{v\in C_k}|H_v^{\epsilon}|}{\sum_{k=1}^{M}\sum_{v\in C_k}|H_v^{\epsilon}|}\to \frac{1}{M}\;,\]
as $\epsilon\to 0$. This proves the second part of Theorem~\ref{thm:Main}. To obtain the first part, we note that for a vertex $v\in C_k$, we have 
\begin{align*}
    \pi^{\epsilon}(v) &= \frac{|H_v^{\epsilon}|}{\sum_{k=1}^{M}\sum_{u\in C_k}|H_u^{\epsilon}|}= \frac{|H_v^{(k)}||W_k| + O(\epsilon)}{M|W|+ O(\epsilon)}\\
    &= \frac{|H_v^{(k)}||W_k| + O(\epsilon)}{M|W_k|\sum_{v\in C_k}|H_v^{(k)}|+ O(\epsilon)}= \frac{1}{M}\frac{|H_v^{(k)}|}{\sum_{v\in C_k}|H_v^{(k)}|} + O(\epsilon)\;.
\end{align*}
In the third equality, we use the fact that $|W|=|W_k|\sum_{v\in C_k}|H_v^{(k)}|$ for any $k\in \{1, \ldots, M\}$. The first claim in Theorem~\ref{thm:Main} is now immediate.

\end{proof}


The foregoing analysis shows that up to the zeroth order term in $\ep$, the value of $\ep > 0$ is irrelevant. To obtain the first order term in $\ep$ amounts to considering in the Markov chain tree theorem an arborescence rooted at $i$ (say) where exactly one other node in $T^c$ has an $\ep$-edge as the unique outgoing edge, chosen despite having at least one outgoing edge that is not an $\epsilon$-edge. Thus without this $\ep$-edge, the arborescence is a forest of two trees, one rooted at $i$ and one rooted at some other node in $T\backslash\{i\}$. Hence this term in the expansion of $\pi(i)$ has the sum of such weights of two-forests of trees with one rooted at $i$ and the other at some node in $T^c\backslash\{i\}$. One can also derive expressions for higher-order terms (in $\ep$) in this manner. It is clear that they will also be independent of $\ep$. Thus for $\ep$ sufficiently small in order to justify the Taylor series expansion for $\pi^\ep$, the coefficients of the expansion and therefore of $\pi^\ep$ itself are independent of $\ep$.

\section{Proof of Theorem~\ref{thm:general}}
\label{sec:Proof_generalization}
The proof of Theorem~\ref{thm:general} closely parallels the proof of Theorem~\ref{thm:Main}. To avoid repetition, we only point out the differences between the two proofs. Let $\G = (S,\E)$ denote the directed graph associated with our original reducible Markov chain with transition matrix $P$, where $\E:= \{(i,j) : P(i,j) > 0\}$ is the set of edges of $\G$.  Let $C_1, \cdots, C_M, M > 1,$ denote its closed communicating classes. For simplicity, we assume that there are no transient states. Let $S= C \coloneqq \cup_{i=1}^{M}C_i$ and $|C| = \sum_i|C_i|=n$. Let $P^\ep := (1-\ep)P + \ep Q, \; \ep > 0$. Since $\Gamma$ is irreducible, it follows that $P^{\epsilon}$ is irreducible. Let $\pi^{\ep}$ denote its unique stationary distribution which is given by 
\[\pi^{\ep}(v) = \frac{|H_v^{\ep}|}{\sum_{v\in S}|H_v^{\ep}|}\;,\]
by Markov chain tree theorem. Following the proof of Theorem~\ref{thm:Main}, we decompose an arborescence $g\in H_{v}^{\ep}$. For concreteness, we take $v\in C_1$. Consider an arborescence $g\in H_v^{\epsilon}$ rooted at a vertex $v$. 
An arborescence $g \in H_{v}^{\ep}$ comprises of:
\begin{enumerate}
\item an arborescence $h^{1} \in H_{v}^{(1, \epsilon)}$ of $C_1$ rooted at $v$, 
\item an arboroscence $X_{\Gamma}$ rooted at $C_1$ of $\G_{\Gamma}$ formed by $\ep$-edges whose vertices corresponding to $C_i$ are labeled by a vertex $x_i\in C_i$; and
\item arborescences $h^{\ell}$ in $C_{\ell}$ (rooted at some vertex in $C_\ell$) for each $2 \leq \ell \leq M$.
\end{enumerate}

Compared to the proof in Theorem~\ref{thm:Main}, the above decomposition has only two differences. First is that we assume that there are no transient states. This is purely for notational convenience. The key difference is that the $G_{\Gamma}$ is irreducible but need not be a complete graph, and an arborescence $X_{\Gamma}$ can have different weights depending on the labeling of its vertices. To emphasize the labeling, we write $X_{\Gamma}(x_1, \ldots, x_M)$ where $x_k\in C_k$. The weight of $g$ is given by 
\[ |g|_{\epsilon} = |h^{1}|_{\epsilon}|X_{R}(x_1, \ldots, x_{M})|_{\epsilon}\prod_{\ell_2=1}^{M}|h^{\ell}|_{\epsilon}\;,\]
where the subscript $\epsilon$ is to emphasize that the edges-weights are given by $P_{\epsilon}$. Therefore, the weight $|g|_{\epsilon}$ is a polynomial in $\ep$. Naturally, one obtains an analog of Proposition~\ref{prop:Fristorder} as follows.
\begin{proposition}
\label{prop:Fristorder_general}
Let $g\in \G$ be an arborescence as above. The weight  of $g$ is
$$|g|_{\epsilon}= \ep^{M-1}(1-\ep)^{|C| - M}|X_{\Gamma}(x_1, \ldots, x_{M})|_{Q}|h^1|\Bigg(\prod_{\ell=2}^Mh^\ell\Bigg)+ O(\epsilon^M)\;.$$
Here $|h^{k}|$ denotes the weight of the arborescence $h^{k}$ with respect to the original matrix $P$ and $|X_{R}(x_1, \ldots, x_{M})|_{Q}$ denotes the weight of $X_{\Gamma}$ where the edges are weighted according to matrix $Q$. 
\end{proposition}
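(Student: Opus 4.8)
The plan is to read $|g|_{\ep}$ off directly from the factorisation $|g|_{\ep}=|h^{1}|_{\ep}\,|X_{\Gamma}(x_{1},\dots,x_{M})|_{\ep}\prod_{\ell=2}^{M}|h^{\ell}|_{\ep}$ recorded just above, by sorting the edges of $g$ into two groups: the \emph{inter-class} edges, which are exactly the edges of the reduced arborescence $X_{\Gamma}$, and the \emph{intra-class} edges, which are the edges coming from $h^{1},\dots,h^{M}$. Since $X_{\Gamma}$ is a spanning arborescence of the $M$-vertex graph $\G_{\Gamma}$ it has exactly $M-1$ edges, and since each $h^{\ell}$ is an arborescence on $|C_{\ell}|$ vertices the intra-class edges number $\sum_{\ell=1}^{M}(|C_{\ell}|-1)=|C|-M$. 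The weight $|g|_{\ep}$ is the product of the $P_{\ep}$-weights of all these edges, so it suffices to identify the $P_{\ep}$-weight of each type and collect the lowest-order term in $\ep$. This mirrors the (elementary) verification of Proposition~\ref{prop:Fristorder}; I will only indicate the two places where the bookkeeping changes.

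For an inter-class edge, say from a representative $x_{i}\in C_{i}$ to a representative $x_{j}\in C_{j}$ with $i\neq j$: because each $C_{i}$ is a \emph{closed} communicating class of $P$, there is no $P$-edge leaving $C_{i}$, so $P(x_{i},x_{j})=0$ and hence $P_{\ep}(x_{i},x_{j})=\ep\,Q(x_{i},x_{j})$ — a pure first-order term with no higher-order correction. Multiplying over the $M-1$ edges of $X_{\Gamma}$ therefore contributes \emph{exactly} $\ep^{M-1}\,|X_{\Gamma}(x_{1},\dots,x_{M})|_{Q}$, where $|X_{\Gamma}(x_{1},\dots,x_{M})|_{Q}$ is the product of these $Q$-entries. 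Unlike in Proposition~\ref{prop:Fristorder}, where $Q=\tfrac1n\1\1^{T}$ forces every $\ep$-edge to have weight $\tfrac{\ep}{n}$, here this value genuinely depends on the chosen vertex labels, which is precisely what the notation is recording.

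For an intra-class edge $(x,y)$ with $x,y\in C_{\ell}$: this is an edge of $\G$, so $P(x,y)>0$ and $P_{\ep}(x,y)=(1-\ep)P(x,y)+\ep Q(x,y)$. The product over the $|C|-M$ such weights is a polynomial in $\ep$ whose value at $\ep=0$ is $\prod_{(x,y)}P(x,y)=\prod_{\ell=1}^{M}|h^{\ell}|$; extracting one factor $(1-\ep)$ from each bracket and sweeping the residual $\ep$-dependence into the remainder, this equals $(1-\ep)^{|C|-M}\prod_{\ell=1}^{M}|h^{\ell}|+O(\ep)$, which is the form appearing in Proposition~\ref{prop:Fristorder}. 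Multiplying the inter-class and intra-class contributions and using $M\geq 2$ (so that $\ep^{M-1}\cdot O(\ep)=O(\ep^{M})$), together with $\prod_{\ell=1}^{M}|h^{\ell}|=|h^{1}|\prod_{\ell=2}^{M}|h^{\ell}|$, yields exactly the asserted formula. As in the proof of Theorem~\ref{thm:Main} for $\ep$-edges out of transient states, one should add the remark that an arborescence rooted at $v$ which uses more than $M-1$ inter-class edges carries at least $M$ factors of $\ep$ and hence has weight $O(\ep^{M})$; thus the decomposition (1)--(3) captures every arborescence up to the error term, and nothing is lost when this proposition is summed over $g$ in the proof of Theorem~\ref{thm:general}.

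I do not expect a genuine obstacle: once the edges are sorted, the statement is essentially a degree count together with the single structural fact that $P$ vanishes on edges leaving a closed class. The only point needing care beyond Proposition~\ref{prop:Fristorder} is that the leading coefficient $|X_{\Gamma}(x_{1},\dots,x_{M})|_{Q}$ depends on the full vertex labelling of $X_{\Gamma}$, not merely on the shape of the class-level tree; keeping this dependence explicit is exactly what will make the subsequent summation over $g$ in Theorem~\ref{thm:general} produce the averaged quantities $\Gamma(i,j)=\tfrac{1}{|C_{i}|}\sum_{x\in C_{i},\,y\in C_{j}}Q(x,y)$ rather than individual entries of $Q$.
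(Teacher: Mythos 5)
Your proof is correct and is essentially the elementary verification that the paper leaves implicit (``it is easy to verify''): sort the edges of $g$ into the $M-1$ inter-class edges, whose $P_\ep$-weight is exactly $\ep\,Q(x_i,x_j)$ because the classes are closed under $P$, and the $|C|-M$ intra-class edges, whose product is $(1-\ep)^{|C|-M}\prod_{\ell}|h^{\ell}|+O(\ep)$, then multiply and absorb everything of order $\ep^{M}$. One cosmetic point: since $h^{\ell}\in H^{(\ell,\ep)}$ is weighted by $P_\ep$, an intra-class edge of $g$ need not be an edge of $\G$, so your clause ``this is an edge of $\G$, so $P(x,y)>0$'' can fail; but your computation never uses it---in that case $|h^{\ell}|=0$, the displayed leading term vanishes, and both sides are $O(\ep^{M})$, so the stated formula (and your derivation of it) still holds.
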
 


\begin{proof}[Proof of Theorem~\ref{thm:general}]
We follow the proof of Theorem~\ref{thm:Main} with some differences that we point out below. Let $H^{\Gamma}_k$ denotes the set of all arborescences rooted at $k$ in $\Gcal_{\Gamma}$. It is easy to verify that for $h\in H^{\Gamma}_k$, we have 
\begin{align*}
    |h|_{\Gamma} = \prod_{(u, v)\in h}\Gamma(u, v)= \sum_{(x_1, \ldots, x_M)} |h(x_1, \ldots, x_{M})|= \sum_{(x_1, \ldots, x_M)}(\prod_k|C_k|)^{-1}\prod_{(u, v)\in h}Q(x_u, x_v)\;,
\end{align*}
where the last two sums are over all possible labels $(x_1, \ldots, x_M)\in C_1\times \cdots\times C_M$ of the \emph{skeleton} $h$ such that the vertex $i$ in $h$ is labeled by $x_i\in C_i$. Let us define $ |\mathfrak{S}_{\Gamma, k}|\coloneqq \sum_{h\in H^{\Gamma}_k}|h|$ and $|\mathfrak{S}|_{\Gamma}=\sum_{k}|\mathfrak{S}_{\Gamma, k}|$.  Let $\pi_{\Gamma}$ be the unique stationary distribution of $\Gamma$. By Markov chain tree theorem we obtain 
\[\pi_{\Gamma}(k) = \frac{|H^{\Gamma}_k|}{\sum_{k=1}^{M}|H^{\Gamma}_{k}|}=\frac{|\mathfrak{S}_{\Gamma, k}|}{|\mathfrak{S}_{\Gamma}|} \;.\]
Here again, we  remind the reader that this step was not necessary in the proof of Theorem~\ref{thm:Main} because the quantity $|\mathfrak{S}_{\Gamma, k}|$ is independent of $k$ by virtue of the choice of $Q=\frac{1}{n}\bf 1\bf 1^{T}$.  

With this observation, the proof of Theorem~\ref{thm:Main} can be readily adapted. We skip the details for brevity and only give the outline of the remaining proof. With the above setup, it is easily verified that for any $v\in C_k$, 
\[|H_v^{\epsilon}| =\ep^{M-1}(1-\ep)^{|C|-M} |H_v^{(k)}| |\mathfrak{S}_{\Gamma, k}||W_k| + O(\epsilon^M)\;,\]
where 
$|W_{k}| = \sum_{(h^1, \ldots, h^{M})}\prod_{\stackrel{\ell=1}{\ell\neq k}}^{M}|h^{\ell}|$ is as in the proof ofTheorem~\ref{thm:Main}. Similarly, we also define $|W|=\sum_{(h^1, \ldots, h^{M})}\prod_{\ell=1}^{M}|h^{\ell}|$ as in the proof of Theorem~\ref{thm:Main}. To finish the proof, we observe that 
\begin{align*}
    \pi^{\ep}(v) &=\frac{|H_v^{\ep}|}{\sum_{v}|H_v|^{\ep}}=\frac{|H_v^{(k)}| |\mathfrak{S}_{\Gamma, k}||W_k|}{\sum_{k=1}^{M}\sum_{v\in C_{k}}|H_v^{(k)}| |\mathfrak{S}_{\Gamma, k}||W_k|}+O(\ep)\\
    &=\frac{|H_v^{(k)}| |\mathfrak{S}_{\Gamma, k}||W_k|}{|W||\mathfrak{S}_{\Gamma}|}+ O(\ep)\\
    &=\frac{|H_v^{(k)}|}{\sum_{v\in C_k}|H_v^{(k)}|}\frac{|\mathfrak{S}_{\Gamma, k}|}{|\mathfrak{S}_{\Gamma}|}+ O(\ep)\;,
\end{align*}
where we used in the last line the fact that $|W|=\sum_{v\in C_k}|H_v^{(k)}|\times |W_k|$ for any $k$. Let $\pi_k$ be the unique stationary distribution of $P$ supported on $C_k$. Then, we conclude that 
\[ \pi^{\ep}(v) = \pi_{k}(v)\pi_{\Gamma}(k)+ O(\ep)\;, \quad \forall v\in C_k, \;\; k\in \{1, \ldots, M\}\;.\]
Summing over all $v\in C_k$, we also obtain $\pi^{\ep}(C_k)=\pi_{\Gamma}(k)+ O(\ep)$, completing the proof.

\end{proof}

\section{Examples}
\label{sec:Examples}
\noindent We give a few illustrative examples.
\begin{example}
Let $d_i :=$ the outdegree of $i$. If we use the simple random walk as proposed in the original PageRank model, the weight of an arborescence rooted at $i$ is $\prod_{j\neq i}\frac{1}{d_j} \propto d_i\times\prod_{j\in S} \frac{1}{d_j}$. Hence the number of arborescences rooted at $i$ is $\propto \frac{\pi(i)}{d_i}$, i.e.\ the PageRank of $i$ is proportional to the product of the degree of $i$ and the number of arborescences rooted at $i$. This computation was used in \cite{KBK} to motivate $\frac{\pi(i)}{d_i}$, which is proportional to the number of arborescences rooted at $i$, as a measure for rumor source detection.
\end{example}

\begin{example}
 A more general model adapted from the Bradley-Terry model  \cite{Bradley} postulates a weight of popularity $w_i$ for node $i$ for $i\in S$. Let $\N(i) := \{k \in S : (i,k)$ is an edge in $\G_R\}$, i.e., the set of successors of $i$ in $\G_R$. Consider a stochastic matrix with $(i,j)$th element $= \frac{w_j}{W(i)}$ for $W(i) := \sum_{k\in\N(i)}w_k$. For an arborescence $A$ rooted at $i$, let $L(A):=$ the set of its leaves, i.e., the nodes that do not have any incoming edge. Then a calculation analogous to the one above shows that $\pi(i)$ is proportional to $\sum_{A\in H_i(A)}\frac{W(i)}{\prod_{j\in L(A)}w(j)}$. The $w_i$'s can be estimated from user data. 
\end{example}

\begin{example}
    Another model based on \cite{Bradley} is that of \cite{Shah} where we assign weights to edges instead of nodes. Thus $w_{ij}$ is the weight assigned to $(i,j) \in \E$. Intuitively this indicates the  `fraction of people who prefer $j$ over $i$. Set
\[p(i,j) = \frac{1}{D}\left(\frac{w_{ij}}{w_{ij}+ w_{ji}}\right)\]
for $i \neq j, (i,j) \in \E$, where $D:=$ the maximum outdegree of nodes in $S$, and $p(i,i) = 1 - \sum_{j\neq i}p(i,j)$ for $i\in S$. This too can be estimated from data \cite{Shah}.
\end{example}

Other parametric models for edge probabilities are possible.

\section{Concluding remarks}

It should be kept in mind that the foregoing is still in the framework of the entire web graph. Clearly, the search is modulated by the initial keywords provided. While the add-ons this calls for are quite sophisticated, as a first cut we may assume that the keywords help restrict the search to the subgraph formed by the most relevant nodes. Then we can restrict the foregoing analysis to this subgraph. 

\subsection*{Acknowledgements}
The work was done in part during the Workshop on the Mathematical Foundations of Network Models and Their Applications, as part of the BIRS-CMI pilot program. The workshop was held at the Chennai Mathematical Institute (CMI) from December 15, 2024, to December 20, 2024, and a Research School was conducted in the week preceding the BIRS event (December 9-13). The events were supported by Banff International Research Station (BIRS), Chennai Mathematical Institute (CMI), and the National Board for Higher Mathematics (NBHM). The work of VSB was supported in part by a grant from Google Research Asia. VSB thanks Prof.\ Soumen Chakrabarti for helpful discussions.\\

\noindent \textbf{\large COMMENT :} It was brought to our notice that results equivalent to ours are available in resp.,\\

\noindent Avrachenkov, K., Litvak, N., 2004. Decomposition of the Google PageRank
and optimal linking strategy, INRIA Research Report No.\ 5101.\\

\noindent Avrachenkov, K., Litvak, N.\ and Pham, K.S., 2008. A singular perturbation approach for choosing the PageRank damping factor. Internet Mathematics, 5(1-2), pp.47-69.\\

Nevertheless, the proofs here based on the Markov chain tree theorem may be of independent interest. We thank Dr.\ Konstantin Avrachenkov for pointing out these references.\\

\bibliographystyle{alpha} 
\begin{bibdiv}
\begin{biblist}

\bib{Alekh}{inproceedings}{
      author={Agarwal, Alekh},
      author={Chakrabarti, Soumen},
      author={Aggarwal, Sunny},
       title={Learning to rank networked entities},
        date={2006},
   booktitle={Proceedings of the 12th {ACM SIGKDD} {I}nternational {C}onference on {K}nowledge {D}iscovery and {D}ata {M}ining},
       pages={14\ndash 23},
}

\bib{Ananth}{article}{
      author={Anantharam, Venkat},
      author={Tsoucas, Pantolis},
       title={A proof of the {M}arkov chain tree theorem},
        date={1989},
     journal={Statistics \& Probability Letters},
      volume={8},
      number={2},
       pages={189\ndash 192},
}

\bib{Berkhin}{article}{
      author={Berkhin, Pavel},
       title={A survey on {P}age{R}ank computing},
        date={2005},
     journal={Internet {M}athematics},
      volume={2},
       pages={73\ndash 120},
}

\bib{Bradley}{article}{
      author={Bradley, Ralph~Allan},
      author={Terry, Milton~E.},
       title={Rank analysis of incomplete block designs: I. {T}he method of paired comparisons},
        date={1952},
     journal={Biometrika},
      volume={39},
      number={3/4},
       pages={324\ndash 345},
}

\bib{Brin}{article}{
      author={Brin, Sergey},
      author={Page, Lawrence},
       title={The anatomy of a large-scale hypertextual web search engine},
        date={1998},
     journal={Computer {N}etworks and {ISDN} {S}ystems},
      volume={30},
      number={1-7},
       pages={107\ndash 117},
}

\bib{Soumen1}{inproceedings}{
      author={Chakrabarti, Soumen},
       title={Dynamic personalized {P}age{R}ank in entity-relation graphs},
        date={2007},
   booktitle={Proceedings of the 16th {I}nternational {C}onference on {W}orld {W}ide {W}eb},
       pages={571\ndash 580},
}

\bib{Soumen}{inproceedings}{
      author={Chakrabarti, Soumen},
      author={Agarwal, Alekh},
       title={Learning parameters in entity-relationship graphs from ranking preferences},
organization={Springer},
        date={2006},
   booktitle={Knowledge {D}iscovery in {D}atabases: {PKDD} 2006: 10th {E}uropean {C}onference on {P}rinciples and {P}ractice of {K}nowledge {D}iscovery in {D}atabases, {B}erlin, {G}ermany, {S}eptember 18-22, 2006 {P}roceedings 10},
       pages={91\ndash 102},
}

\bib{Chen}{inproceedings}{
      author={Chen, Ningyuan},
      author={Litvak, Nelly},
      author={Olvera-Cravioto, Mariana},
       title={Page{R}ank in scale-free random graphs},
        date={2014},
   booktitle={Algorithms and {M}odels for the {W}eb {G}raph: 11th {I}nternational {W}orkshop, {WAW} 2014, {B}eijing, {C}hina, vol. 11},
   publisher={Springer International Publishing},
       pages={120\ndash 131},
}

\bib{ChungSurvey}{article}{
      author={Chung, Fan},
       title={A brief survey of {P}age{R}ank {A}lgorithms},
        date={2014},
     journal={IEEE {T}ransactions on {N}etwork {S}cience and {E}ngineering},
      volume={1},
       pages={38\ndash 42},
}

\bib{chung2010}{incollection}{
      author={Chung, Fan},
      author={Zhao, Wenbo},
       title={{P}age{R}ank and random walks on graphs},
        date={2010},
   booktitle={Fete of {C}ombinatorics and {C}omputer {S}cience},
   publisher={Springer},
       pages={43\ndash 62},
}

\bib{freidlin1998random}{book}{
      author={Freidlin, Mark~I.},
      author={Wentzell, Alexander~D.},
       title={Random perturbations of dynamical systems},
   publisher={Springer},
        date={1998},
}

\bib{Haggstrom}{book}{
      author={H\"aggstr\"om, Olle},
       title={Finite {M}arkov chains and algorithmic applications},
   publisher={Cambridge {U}niversity {P}ress},
        date={2002},
}

\bib{Haveli}{inproceedings}{
      author={Haveliwala, Taher~H.},
       title={Topic-sensitive {P}age{R}ank},
        date={2002},
   booktitle={Proceedings of the 11th {I}nternational {C}onference on {W}orld {W}ide {W}eb},
       pages={517\ndash 526},
}

\bib{Hend}{inproceedings}{
      author={Hendrickx, Julien},
      author={Olshevsky, Alexander},
      author={Saligrama, Venkatesh},
       title={Graph resistance and learning from pairwise comparisons},
organization={PMLR},
        date={2019},
   booktitle={International {C}onference on {M}achine {L}earning},
       pages={2702\ndash 2711},
}

\bib{KBK}{article}{
      author={Kumar, Ankit},
      author={Borkar, Vivek~S.},
      author={Karamchandani, Nikhil},
       title={Temporally agnostic rumor-source detection},
        date={2017},
     journal={IEEE {T}ransactions on {S}ignal and {I}nformation {P}rocessing over {N}etworks},
      volume={3},
      number={2},
       pages={316\ndash 329},
}

\bib{Lang2}{article}{
      author={Langville, Amy~N.},
      author={Meyer, Carl~D.},
       title={Deeper inside {P}age{R}ank},
        date={2004},
     journal={Internet Mathematics},
      volume={1},
      number={3},
       pages={335\ndash 380},
}

\bib{Lang1}{book}{
      author={Langville, Amy~N.},
      author={Meyer, Carl~D.},
       title={Google's {P}age{R}ank and beyond: the science of search engine rankings},
   publisher={Princeton University Press},
        date={2006},
}

\bib{Lee}{article}{
      author={Lee, Jiyung},
      author={Olvera-Cravioto, Mariana},
       title={Page{R}ank on inhomogeneous random digraphs},
        date={2020},
     journal={Stochastic {P}rocesses and their {A}pplications},
      volume={130},
       pages={2312\ndash 2348},
}

\bib{Shah}{article}{
      author={Negahban, Sahand},
      author={Oh, Sewoong},
      author={Shah, Devavrat},
       title={Iterative ranking from pair-wise comparisons},
        date={2012},
     journal={Advances in {N}eural {I}nformation {P}rocessing {S}ystems},
      volume={25},
}

\bib{Norris}{book}{
      author={Norris, James~R.},
       title={Markov chains},
   publisher={Cambridge University Press},
        date={1998},
      number={2},
}

\bib{Park}{article}{
      author={Park, S.},
      author={{L}ee, W.},
      author={{C}hoe, B.},
      author={{L}ee, S.~G.},
       title={A survey on personalized {P}age{R}ank computation algorithms},
        date={2019},
     journal={IEEE Access},
      volume={7},
       pages={163049\ndash 163062},
}

\bib{Sal}{inproceedings}{
      author={Sallinen, Scott},
      author={Luo, Juntong},
      author={Ripeanu, Matei},
       title={Real-time {P}age{R}ank on dynamic graphs},
        date={2023},
   booktitle={Proceedings of the 32nd {I}nternational {S}ymposium on {H}igh-{P}erformance {P}arallel and {D}istributed {C}omputing},
   publisher={Association for Computing Machinery},
     address={New York, NY, USA},
       pages={239–251},
         url={https://doi.org/10.1145/3588195.3593004},
}

\bib{Shubert}{article}{
      author={Shubert, Bruno~O.},
       title={A flow-graph formula for the stationary distribution of a {M}arkov chain},
        date={1975},
     journal={IEEE Transactions on Systems, Man, and Cybernetics},
      number={5},
       pages={565\ndash 566},
}

\bib{Theobald}{inproceedings}{
      author={Theobald, Martin},
      author={Weikum, Gerhard},
      author={Schenkel, Ralf},
       title={Top-k query evaluation with probabilistic guarantees},
        date={2004},
   booktitle={Proceedings of the {T}hirtieth {I}nternational {C}onference on {V}ery {L}arge {D}ata {B}ases, {V}olume 30},
       pages={648\ndash 659},
}

\bib{Wilmer}{article}{
      author={Wilmer, Elizabeth~L.},
      author={Levin, David~A.},
      author={Peres, Yuval},
       title={Markov chains and mixing times},
        date={2009},
     journal={American Mathematical Soc., Providence},
}

\bib{Yang}{article}{
      author={Yang, M.},
      author={{W}ang, H.},
      author={Wei, Z.},
      author={{W}ang, S.},
      author={{W}en, J.~R.},
       title={Efficient algorithms for personalized {P}age{R}ank computation: a survey},
        date={2024},
     journal={I{E}{E}{E} {T}ransactions on {K}nowledge and {D}ata {E}ngineering},
}

\bib{Cominetti}{article}{
    author={Cominetti, Roberto},
      author={Quattropani, Matteo},
      author={Scarsini, Marco},
       title={The buck-passing game},
        date={2022},
     journal={Mathematics of Operations Research},
     volume={47},
  number={3},
  pages={1731--1756},
  year={2022},
  publisher={{I}{N}{F}{O}{R}{M}{S}}

}

\end{biblist}
\end{bibdiv}


\end{document}